\documentclass[11pt]{amsart}

\usepackage[margin=1in]{geometry}
\usepackage{amsmath,amssymb,amsthm,mathtools}
\usepackage{color}
\usepackage{graphicx,float}
\usepackage{enumitem}
\usepackage[T1]{fontenc}
\usepackage{lmodern}
\usepackage[hidelinks]{hyperref}

\setlist[enumerate]{itemsep=0.35em,topsep=0.35em}

\newtheorem{theorem}{Theorem}
\newtheorem{conjecture}[theorem]{Conjecture}
\newtheorem{proposition}[theorem]{Proposition}
\newtheorem{lemma}[theorem]{Lemma}
\theoremstyle{remark}

\DeclareMathOperator{\Sym}{\mathsf{S}^2}
\DeclareMathOperator{\GL}{GL}
\DeclareMathOperator{\diag}{diag}
\DeclareMathOperator{\tr}{tr}
\DeclareMathOperator{\sgn}{sgn}

\newcommand{\tp}{{\scriptscriptstyle\mathsf{T}}}

\begin{document}

\title{Pierce--Birkhoff conjecture is false}
\author[Z.~Lai]{Zehua~Lai}
\address{Institute of Applied Mathematics, Academy of Mathematics and Systems Science, Chinese Academy of Sciences, Beijing 100190, China}
\email{lzh@amss.ac.cn}
\author[L.-H.~Lim]{Lek-Heng Lim}
\author[J.~Ren]{Junyu Ren}
\address{Computational and Applied Mathematics Initiative, University of Chicago, Chicago, IL 60637}
\email{lekheng, junyuren@uchicago.edu}
\date{September 9, 2026}

\begin{abstract}
We provide a counterexample to the Pierce--Birkhoff conjecture: a continuous function that is piecewise quadratic on a finite collection of semialgebraic sets that partition $\mathbb{R}^n$ but that cannot be expressed as a finite lattice combination of polynomials. 
The counterexample was found with the assistance of our multi-agent, multi-model setup that chains together GPT 5.6, GPT 6, Claude Opus 5, and Claude Fable 5.1.
\end{abstract}

\maketitle

The Pierce--Birkhoff conjecture states that every continuous piecewise polynomial function $f : \mathbb{R}^n \to \mathbb{R}$ on a semialgebraic partition of $\mathbb{R}^n$ is a finite lattice combination of polynomials, i.e., the functions generated by addition, multiplication, and maximization of polynomial functions on $\mathbb{R}^n$. To be precise, such a function takes the form
\begin{equation}\label{eq:def}
f = g_1 1_{\Pi_1} + \dots +  g_r 1_{\Pi_r}
\end{equation}
where $g_1,\dots,g_r \in \mathbb{R}[x_1,\dots,x_n]$, and
\begin{equation}\label{eq:part}
\mathbb{R}^n = \Pi_1 \cup \dots \cup \Pi_r
\end{equation}
is a partition into semialgebraic sets $\Pi_1,\dots,\Pi_r$. Here $1_{\Pi}$ denotes the indicator function. A function as defined by \eqref{eq:def} and \eqref{eq:part} is sometimes called a semialgebraic spline \cite{dipasquale2017}, a generalization of splines \cite{Chui}, the special case when $\Pi_1,\dots,\Pi_r$ are polytopes.

So a precise statement of the Pierce--Birkhoff conjecture is as follows:
\begin{conjecture}[Pierce--Birkhoff]\label{conj:PB}
Any semialgebraic spline $f : \mathbb{R}^n \to \mathbb{R}$ can be expressed as
\begin{equation}\label{eq:PB}
f=\max_{i=1,\dots,p}\min_{j=1,\dots,p'} f_{ij}
\end{equation}
with finitely many polynomials $f_{ij} \in \mathbb{R}[x_1,\dots,x_n]$, $i =1,\dots,p$, $j=1,\dots,p'$.
\end{conjecture}
The conjecture was first formally stated in  \cite{HI} but has roots in Birkhoff and Pierce's earlier work \cite{BP}. It is only known to be true for $n \le 2$ \cite{Mahe1984}. We have proved this conjecture in \cite{LL} (a purely human endeavor with no AI usage) for splines, i.e., when the semialgebraic sets $\Pi_1,\dots,\Pi_r$ are defined by \emph{linear} polynomials. The counterexample constructed below shows that this result does not extend to arbitrary semialgebraic partitions, even for piecewise quadratic functions.

The role of large language models in our discovery and verification process is described in Section~\ref{sec:ai-usage}. The counterexample in this article is notably not found with any single large language model (we tried doing so without success) but requires harnessing multiple systems from both OpenAI and Anthropic in a more intricate setup.

The $n = 30$ counterexample presented below is chosen because it can be readily ``checked by hand.'' Nevertheless, our setup has also generated counterexamples of dimensions $n = 5$, $6$, $7$, $22$, $24$, $72$ although they are harder for a human reader to verify. The counterexamples for $n =5$, $6$, $7$, $30$ (the one described in this article), and $72$ have also been verified through Lean-formalization: \url{https://github.com/7pocheR/Pierce-Birkhoff}.

\section{The 30-dimensional function}

We begin by defining the $30$-dimensional space
\[
\mathbb{V} \coloneqq \Sym(\mathbb{R}^5)\times\Sym(\mathbb{R}^5).
\]
A point of $\mathbb{V}$ is a pair of symmetric matrices $(X,Y)$. Set
\[
P=XY,
\]
noting that $P$ need not be a symmetric matrix even though $X$ and $Y$ are. For $i\ne j$ define the homogeneous quadratic forms
\[
q_{ij,\pm}(X,Y)=P_{ii}\pm 4 P_{ij}.
\]
There are $5\cdot4\cdot2=40$ such forms. Set
\[
h(X,Y)=\min_{i\ne j,\, \pm}q_{ij, \pm}(X,Y).
\]
Finally define
\begin{equation}\label{eq:counterexample}
f(X,Y)=
\begin{cases}
h(X,Y),& h(X,Y)>0\text{ and }\det X>0,\\[2mm]
0,&\text{otherwise}.
\end{cases}
\end{equation}

\begin{lemma}
If $h(X,Y)>0$, then $P=XY$ is invertible. In particular $X$ and $Y$ are invertible.
\end{lemma}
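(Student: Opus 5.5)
The plan is to show that $h(X,Y)>0$ forces $P=XY$ to be strictly diagonally dominant by rows, and then to invoke the Levy--Desplanques theorem (equivalently, Gershgorin's circle theorem).

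\emph{Step 1: entrywise bounds.} Suppose $h(X,Y)>0$. Since $h$ is the minimum of all forty forms $q_{ij,\pm}$, every one of them is positive. Thus for every $i\ne j$ we have both
\[
P_{ii}+4P_{ij}>0 \quad\text{and}\quad P_{ii}-4P_{ij}>0,
\]
so $P_{ii}>4|P_{ij}|$. In particular $P_{ii}>0$ for each $i$.

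\emph{Step 2: diagonal dominance.} Fix a row $i$. It has exactly four off-diagonal entries, since $n=5$. Summing the bound $|P_{ij}|<P_{ii}/4$ over $j\ne i$ gives
\[
\sum_{j\ne i}|P_{ij}|<4\cdot\tfrac14 P_{ii}=P_{ii}=|P_{ii}|.
\]
This is exactly where the constant $4$ in the definition of $q_{ij,\pm}$ is matched to the dimension $5$. Hence $P$ is strictly row diagonally dominant.

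\emph{Step 3: invertibility.} By Gershgorin's theorem, every eigenvalue $\lambda$ of $P$ satisfies $|\lambda-P_{ii}|\le\sum_{j\ne i}|P_{ij}|<P_{ii}$ for some $i$. Therefore $\lambda\neq 0$ and $P$ is invertible. A self-contained alternative: if $Pv=0$ with $v\ne0$, choose $i$ with $|v_i|$ maximal. Then
\[
|P_{ii}||v_i|=\Bigl|\sum_{j\ne i}P_{ij}v_j\Bigr|\le\sum_{j\ne i}|P_{ij}|\,|v_i|<|P_{ii}|\,|v_i|,
\]
which is a contradiction.

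\emph{Step 4: conclusion.} Since $\det X\det Y=\det P\ne0$, both $X$ and $Y$ are invertible.

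There is no real obstacle in this argument. The only point needing care is the count in Step~2: the number of off-diagonal entries per row, $4$, must equal the factor $4$ appearing in $q_{ij,\pm}$, which is what makes the dominance strict.
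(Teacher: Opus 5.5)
Your proposal is correct and follows the paper's proof essentially verbatim: both derive $P_{ii}>4|P_{ij}|$ from the two signs of $q_{ij,\pm}$, sum over the four off-diagonal entries to get strict row diagonal dominance, and conclude invertibility of $P$ and hence of $X$ and $Y$. The only difference is that you justify the invertibility of a strictly diagonally dominant matrix explicitly (via Gershgorin or the maximal-coordinate argument), which the paper takes as standard.
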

\begin{proof}
For every $i\ne j$, positivity of both $q_{ij,+}$ and $q_{ij,-}$ gives
\[
P_{ii}>4|P_{ij}|.
\]
Hence $P_{ii}>0$ and
\[
\sum_{j\ne i}|P_{ij}|<P_{ii}
\]
for every row $i$. Thus $P$ is strictly diagonally dominant by rows. Therefore $P$ is invertible. Since $P=XY$, both factors are invertible.
\end{proof}

Recall that a function is positively homogeneous of degree $d$ if $f(t x) = t^d f(x)$ for all $t > 0$.
\begin{proposition}
The function $f$ is a continuous, positively homogeneous of degree two, and piecewise quadratic polynomial.
\end{proposition}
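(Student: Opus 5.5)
The three claims will be handled separately, and continuity is the only one that needs real work.

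\emph{Homogeneity and piecewise form.} Each entry of $P=XY$ is a homogeneous quadratic form in $(X,Y)$, and hence so is each $q_{ij,\pm}$. Since positive scaling preserves the order of real numbers, $h(tX,tY)=\min q_{ij,\pm}(tX,tY)=t^2h(X,Y)$ for $t>0$. The two defining conditions are also scale invariant: $h>0$ is, and $\det(tX)=t^5\det X$ has the same sign as $\det X$ for $t>0$. So $f(tX,tY)=t^2f(X,Y)$.

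For the piecewise structure, I index the $40$ forms as $q_1,\dots,q_{40}$ and define
\[
\Pi_k=\{h>0,\ \det X>0,\ q_k<q_l \ (l<k),\ q_k\le q_l\ (l>k)\}.
\]
This picks out the first index attaining the minimum, so the sets $\Pi_k$ are disjoint. Each $\Pi_k$ is semialgebraic, because $h>0$ is the conjunction $\bigwedge_l q_l>0$. Let $\Pi_0$ be the complement of their union, again semialgebraic. Then $f=\sum_k q_k1_{\Pi_k}+0\cdot 1_{\Pi_0}$, which is of the form \eqref{eq:def}--\eqref{eq:part} with quadratic pieces.

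\emph{Continuity.} Let $U=\{h>0,\ \det X>0\}$. This set is open, since $h$ is a minimum of continuous functions and hence continuous. On $U$ we have $f=h$, which is continuous. On the interior of the complement of $U$ we have $f=0$. So it suffices to check points $z$ on the boundary $\partial U$. There I must show that $h(z)=0$, because then $f\to0$ from both sides: $|f|\le|h|$ near $z$ on $U$, and $f=0$ off $U$.

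Take $z\in\partial U$, so $z=\lim z_m$ with $z_m\in U$. Then $h(z)\ge0$ and $\det X(z)\ge0$. Suppose for contradiction that $h(z)>0$. By the Lemma, $X(z)$ is invertible, so $\det X(z)\neq0$, and therefore $\det X(z)>0$. Then $z\in U$, and since $U$ is open, $z$ is not a boundary point. This contradiction gives $h(z)=0$.

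The key point, and the only nontrivial step, is that the Lemma rules out the bad boundary piece $\{\det X=0,\ h>0\}$, where $f$ would otherwise jump. Once that is in hand, everything else is routine bookkeeping.
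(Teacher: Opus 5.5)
Your proof is correct and rests on the same key point as the paper's: the diagonal-dominance Lemma forces $\det X\neq 0$ wherever $h>0$. The paper phrases this as local constancy of $\sgn(\det X)$ on the open set $\{h>0\}$, while you phrase it as a contradiction at boundary points of $\{h>0,\ \det X>0\}$. Your write-up is also more complete: you explicitly handle the points where $h=0$ and write out the semialgebraic partition, both of which the paper leaves implicit.
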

\begin{proof}
On the open set $\{h>0\}$, the previous lemma implies $\det X\ne0$, so $\sgn(\det X)$ is locally constant. This implies $f$ is  continuous.

By definition, we see that
\[
q_{ij,\pm}(tX,tY)=t^2q_{ij,\pm}(X,Y),
\qquad
\det(tX)=t^5\det X
\]
for $t>0$. Hence
\[
f(tX,tY)=t^2f(X,Y). \qedhere
\]
\end{proof}

\section{A property of lattice combination of polynomials}

To prove $f$ is indeed a counterexample, i.e., not a lattice combination of polynomials, we need the following property of lattice combination of polynomials.

\begin{lemma}[Separation by quadratic polynomials]
Let $g:\mathbb R^n\to\mathbb R$ satisfy
\[
g(tx)=t^2g(x)
\]
for all $t>0$. 
If $g$ has a finite lattice expression in arbitrary real polynomials, then there exists a finite family $S$ of real polynomials of degree at most two such that the signs of polynomials in $S$ determine the sign of $g$.
\end{lemma}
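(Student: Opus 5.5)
The plan is to use the homogeneity of $g$ to replace the polynomials in the lattice expression by their low-degree homogeneous components, via a limit $t\to0^+$. First I normalize the lattice expression. Every finite lattice combination of polynomials can be written in the max--min normal form \eqref{eq:PB}, since $\max$ and $\min$ distribute over each other. So I will assume
\[
g=\max_{i}\min_{j} f_{ij}.
\]
For each $f_{ij}$, write its decomposition into homogeneous components as $f_{ij}=\sum_{k\ge0} f_{ij}^{(k)}$. Fix $x$. For $t>0$, homogeneity of $g$, together with the fact that $\max$ and $\min$ commute with multiplication by the positive scalar $t^{-2}$, gives
\[
g(x)=t^{-2}g(tx)=\max_i\min_j F_{ij}(t,x),
\qquad
F_{ij}(t,x)=\sum_{k\ge0} t^{k-2}f_{ij}^{(k)}(x).
\]

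The key step is to let $t\to0^+$ and compute each $F_{ij}(t,x)$ in the extended reals $[-\infty,+\infty]$. Let $m$ be the smallest $k$ with $f_{ij}^{(k)}(x)\neq0$.
\begin{enumerate}
\item If $m\le1$, then $F_{ij}(t,x)\to\pm\infty$, with the sign of $f_{ij}^{(m)}(x)$.
\item If $f_{ij}^{(0)}(x)=f_{ij}^{(1)}(x)=0$, then $F_{ij}(t,x)\to L_{ij}(x):=f_{ij}^{(2)}(x)$, which is finite (possibly $0$). Higher-order terms vanish in the limit, so no control of degrees $>2$ is needed.
\end{enumerate}
Since $\max$ and $\min$ are continuous on the extended reals and the left side does not depend on $t$, we get
\[
g(x)=\max_i\min_j L_{ij}(x),
\]
where each $L_{ij}(x)$ is $+\infty$, $-\infty$, or $f_{ij}^{(2)}(x)$. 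The main point to check carefully is this continuity argument: the value $g(x)$ is finite, so the extended-valued max--min must come out finite, and no undefined expression like $\infty-\infty$ ever arises because only $\max$ and $\min$ are applied.

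Now define
\[
S=\{f_{ij}^{(0)},\ f_{ij}^{(1)},\ f_{ij}^{(2)}\}\cup\{f_{ij}^{(2)}-f_{kl}^{(2)}\},
\]
with indices ranging over all pairs. Every member of $S$ is a polynomial of degree at most two; note that $f_{ij}^{(0)}$ is constant. Suppose the signs of all members of $S$ are known at $x$. The signs of $f_{ij}^{(0)}(x)$ and $f_{ij}^{(1)}(x)$ determine which case each $L_{ij}(x)$ falls into, and hence which entries are $\pm\infty$. The signs of the pairwise differences $f_{ij}^{(2)}-f_{kl}^{(2)}$ determine the linear order among the finite entries. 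Therefore the index pair achieving $\max_i\min_j$ is determined, or else the value is $\pm\infty$, which finiteness of $g(x)$ rules out.

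Finally, the value $g(x)$ equals some finite $f_{ij}^{(2)}(x)$ whose index pair is fixed by the sign data. Its sign is recorded in $S$, so the signs of the members of $S$ determine the sign of $g$, as the lemma claims.
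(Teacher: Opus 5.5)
Your proposal is correct and follows essentially the same approach as the paper: the same homogeneous decomposition, the rescaling $g(x)=t^{-2}g(tx)$, the extended-real limits as $t\downarrow0$, and continuity of $\max$ and $\min$ on $[-\infty,+\infty]$. The only difference is the final step, where the paper notes that $\sgn$ is monotone and so commutes with $\max$ and $\min$, giving $\sgn g(x)=\max_i\min_j\sgn u_{ij}(x)$ directly from the signs of the linear and quadratic parts; your pairwise differences $f_{ij}^{(2)}-f_{kl}^{(2)}$ are therefore harmless but unnecessary.
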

\begin{proof}
Write
\[
g=\max_{i=1,\dots,p}\min_{j=1,\dots,p'} g_{ij}.
\]
Decompose
\[
g_{ij}=c_{ij}+\ell_{ij}+Q_{ij}+\sum_{k\ge3}P_{ij,k}
\]
into homogeneous parts. Positive scaling commutes with $\min$ and $\max$, hence
\[
g(x)=\max_{i=1,\dots,p}\min_{j=1,\dots,p'} \frac{g_{ij}(tx)}{t^2}.
\]
As $t\downarrow0$, each entry has an extended-real limit
\[
u_{ij}(x)=
\begin{cases}
\sgn(c_{ij})\cdot\infty&\text{if } c_{ij}\ne0,\\
\sgn(\ell_{ij}(x))\cdot\infty&\text{if } c_{ij}=0,\ \ell_{ij}(x)\ne0,\\
Q_{ij}(x)&\text{if } c_{ij}=0,\ \ell_{ij}(x)=0.
\end{cases}
\]
Terms of degree at least three disappear after division by $t^2$. Since finite $\min$ and $\max$ are continuous on the extended real line,
\[
g(x)=\max_{i=1,\dots,p}\min_{j=1,\dots,p'} u_{ij}(x).
\]
The sign map preserves $\min$ and $\max$. Thus $\sgn g(x)$ is determined by the signs of $u_{ij}(x)$, and thus the signs of $\ell_{ij}(x)$ and $Q_{ij}(x)$.
\end{proof}

Therefore, to prove $f$ is not a lattice combination, it is enough to show that no finite family of polynomials with degree at most two determines $\sgn f$.

\section{The boundary orbit and its quadratic equations}

Define
\[
D=\diag(1,1,0,0,0),
\qquad
E=\diag(0,0,1,1,0),
\]
and consider the orbit
\begin{equation}
\mathcal O=
\bigl\{
\bigl(UDU^\tp ,U^{-\tp} EU^{-1}\bigr):U\in\GL_5(\mathbb R)
\bigr\}\subset \mathbb{V}.
\label{eq:orbit}
\end{equation}
For every $(X,Y)\in\mathcal O$,
\[
XY=UDU^\tp U^{-\tp} EU^{-1}=UDEU^{-1}=0,
\]
and similarly $YX=0$.

\begin{lemma}[Quadratic orbit lemma]\label{lem:quadratic-orbit}
A real polynomial $p(X,Y)$ of total degree at most two vanishes on $\mathcal O$ if and only if
\[
p(X,Y)=L(XY)
\]
for some real linear functional $L$ on $\mathbb{R}^{5 \times 5}$.
\end{lemma}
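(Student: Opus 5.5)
The plan is to exploit the two symmetries of $\mathcal O$: the torus scaling $(X,Y)\mapsto(sX,s^{-1}Y)$ and the $\GL_5$-action. Together they reduce the statement to an irreducibility fact from representation theory.

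The ``if'' direction is immediate, since $XY=0$ on $\mathcal O$ as computed above. For the converse, write a polynomial $p$ of degree at most two as
\[
p=c+a(X)+b(Y)+A(X,X)+B(Y,Y)+C(X,Y),
\]
with $a,b$ linear, $A,B$ quadratic forms and $C$ bilinear. Replacing $U$ by $\sqrt{s}\,U$ with $s>0$ shows that $\mathcal O$ is invariant under $(X,Y)\mapsto(sX,s^{-1}Y)$. The pieces of $p$ have weights $0,1,-1,2,-2,0$ under this scaling. Hence $p\circ(\text{scaling})$ is a Laurent polynomial in $s$ vanishing for all $s>0$, so each weight component vanishes on $\mathcal O$ separately. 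That is, $a$, $b$, $A$, $B$ each vanish on $\mathcal O$, and so does $c+C$.

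\emph{Non-mixed terms.} The $X$-projection of $\mathcal O$ is the set of all rank-two positive semidefinite matrices $vv^\tp+ww^\tp$ with $v,w$ independent. Its closure contains all rank-one matrices $vv^\tp$. Thus $a(vv^\tp)=0$ and $A(vv^\tp,vv^\tp)=0$. Expanding $A(vv^\tp+ww^\tp,vv^\tp+ww^\tp)=0$ then gives $A(vv^\tp,ww^\tp)=0$, and this extends by continuity to all pairs $v,w$. Since the $vv^\tp$ span $\Sym(\mathbb R^5)$, we get $a=0$ and $A=0$. The same argument applies to $b$ and $B$, using that the $Y$-projection is also all rank-two positive semidefinite matrices.

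\emph{Weight-zero part.} Taking $U=\diag(s,s,1,1,1)$ gives the points $(s^2D,E)\in\mathcal O$. Hence $c+s^2C(D,E)=0$ for all $s$, so $c=0$. It remains to show that a bilinear $C$ vanishing on $\mathcal O$ has the form $L(XY)$. View $C$ as a linear functional on $\Sym(\mathbb R^5)\otimes\Sym(\mathbb R^5)$, and let $W$ be the span of $\{X\otimes Y:(X,Y)\in\mathcal O\}$. The multiplication map $\mu:X\otimes Y\mapsto XY$ onto $\mathbb R^{5\times5}$ is surjective; for instance $E_{ii}\cdot(E_{ij}+E_{ji})=E_{ij}$. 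It is also $\GL_5$-equivariant, where $U$ acts by $X\mapsto UXU^\tp$, $Y\mapsto U^{-\tp}YU^{-1}$, and by conjugation on the target, since
\[
(UXU^\tp)(U^{-\tp}YU^{-1})=UXYU^{-1}.
\]
By construction $W\subseteq\ker\mu$, and $W$ is $\GL_5$-stable and nonzero because it contains $D\otimes E$. The claim is that $\ker\mu$ is irreducible. The module $\mathsf S^2V\otimes\mathsf S^2V^*$ decomposes via the Pieri-type rule $\mathsf S^aV\otimes\mathsf S^bV^*=\bigoplus_k V_{(a-k)e_1-(b-k)e_5}$. Here this gives the trivial representation, the adjoint representation, and the Cartan component of dimension $225-25=200$. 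Since $\mathbb R^{5\times5}$ is the trivial plus the adjoint, $\ker\mu$ is the Cartan component, which is absolutely irreducible. Therefore $W=\ker\mu$. Then $C$ annihilates $\ker\mu$ and so factors as $C=L\circ\mu$, which is the desired form $C(X,Y)=L(XY)$.

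The main obstacle is justifying this irreducibility over $\mathbb R$. The plan is to pass to the complexification: the complexified orbit tensors span a $\GL_5(\mathbb C)$-stable subspace, because the Zariski closure of $\GL_5(\mathbb R)$ is $\GL_5(\mathbb C)$. One then cites the standard decomposition of $\mathsf S^2V\otimes\mathsf S^2V^*$ into $\mathfrak{sl}_5$-irreducibles, with highest weights $2e_1-2e_5$, $e_1-e_5$ and $0$. If one prefers a hands-on fallback, the same conclusion can be checked by a dimension count. Computing the tangent directions of the map $U\mapsto UDU^\tp\otimes U^{-\tp}EU^{-1}$ and their iterates at $U=I$ should show that the span has dimension $200$.
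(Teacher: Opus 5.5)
Your proof is correct, and for the bilinear part it takes a genuinely different route from the paper.

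The first difference is minor. The paper uses the two-parameter scaling $U\mapsto U\diag(a,a,b^{-1},b^{-1},1)$, which sends $(X,Y)$ to $(a^2X,b^2Y)$ and separates all bihomogeneous pieces at once. Your one-parameter scaling $(sX,s^{-1}Y)$ leaves $c$ and $C$ in the same weight, which is why you need the extra step with $U=\diag(s,s,1,1,1)$.

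The main difference is how the bilinear form is handled. The paper stays elementary. It degenerates within $\mathcal O$, via $U_\delta=\diag(1,\delta,1,\delta^{-1},1)$ and a change of basis, to show that the closure of $\mathcal O$ contains every pair $(uu^\tp,ss^\tp)$ with $u^\tp s=0$. Then $B(uu^\tp,ss^\tp)$ is a bidegree-$(2,2)$ polynomial vanishing on $u^\tp s=0$. Irreducibility of $u^\tp s$ gives $B(uu^\tp,ss^\tp)=(u^\tp s)(u^\tp As)$, and bilinearity yields $B(X,Y)=\tr(A^\tp XY)$.

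You instead prove the stronger statement that the span of $\{X\otimes Y:(X,Y)\in\mathcal O\}$ is all of $\ker\mu$. You do this by identifying $\ker\mu$ with the Cartan component $V_{2e_1-2e_5}$ of $\mathsf{S}^2V\otimes\mathsf{S}^2V^*$. The dimension $225-25=200$ does match the Weyl dimension of that component (e.g.\ by the hook-content formula for the $\GL_5$-partition $(4,2,2,2)$). Your Zariski-density argument correctly transfers irreducibility from $\GL_5(\mathbb C)$ to the real span.

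What each route buys: yours is more conceptual and needs no degeneration. It gives the bilinear conclusion for any $\GL_5$-stable set of pairs with $XY=0$ that contains a pair with $X,Y\neq0$. The cost is reliance on the mixed-tensor Pieri decomposition and complete reducibility. The paper's argument is self-contained and can be checked line by line.

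Your ``hands-on fallback'' is only a heuristic as written, so your proof rests on the cited decomposition. That is a standard fact and acceptable to cite.
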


\begin{proof}
The reverse implication is immediate from $XY=0$ on $\mathcal O$.

Now we prove that if $p$ vanishes on $\mathcal O$ and has degree at most 2, then $p$ has the desired form. First note that replacing $U$ by
\[
U\diag(a,a,b^{-1},b^{-1},1)
\]
replaces $(X,Y)$ by $(a^2X,b^2Y)$. Hence if $p$ vanishes on $\mathcal O$, then all bihomogeneous components of $p$ vanish on $\mathcal O$.

The projection of $\mathcal O$ to the first coordinate $X$ consists of positive semidefinite symmetric matrices of rank two. Its Euclidean closure contains every rank-one matrix $uu^\tp $. A linear form vanishing on all $uu^\tp $ is zero since matrices of the form $uu^\tp $ span the space $\Sym(\mathbb{R}^5)$.

If a homogeneous quadratic form $Q$ vanishes on the rank-two positive semidefinite locus, then it also vanishes on $uu^\tp $ and on $uu^\tp +vv^\tp $. Polarization gives
\[
\widetilde Q(uu^\tp ,vv^\tp )=0
\]
for all $u,v$. Again, since rank-one symmetric matrices span $\Sym(\mathbb{R}^5)$, $Q=0$. The same argument applies to pure $Y$ components. The constant component is also zero. Thus only a bilinear form $B(X,Y)$ may remain.

We next show that the Euclidean closure of $\mathcal O$ contains every pair
\[
(uu^\tp ,ss^\tp )\qquad\text{with }u^\tp s=0.
\]
Take
\[
U_\delta=\diag(1,\delta,1,\delta^{-1},1).
\]
Then
\[
U_\delta DU_\delta^\tp =E_{11}+\delta^2E_{22},
\qquad
U_\delta^{-\tp} EU_\delta^{-1}=E_{33}+\delta^2E_{44},
\]
which tends to $(E_{11},E_{33})$ as $\delta \to 0$. If $u^\tp s=0$, we can choose $W\in\GL_5(\mathbb R)$ with $We_1=u$ and $W^{-\tp} e_3=s$; applying the fixed change of basis $W$ gives the desired pair in the closure.

Hence
\[
B(uu^\tp ,ss^\tp )=0\qquad\text{whenever }u^\tp s=0.
\]
Set
\[
F(u,s)=B(uu^\tp ,ss^\tp ).
\]
This polynomial has bidegree $(2,2)$ and vanishes on the hypersurface $u^\tp s=0$. The polynomial $u^\tp s$ is irreducible, and its real zero set is Zariski dense in that hypersurface. Therefore
\[
F(u,s)=(u^\tp s)G(u,s)
\]
with $G$ of bidegree $(1,1)$. Thus $G(u,s)=u^\tp As$ for some matrix $A$, so
\[
F(u,s)=(u^\tp s)(u^\tp As).
\]
Define $L(M)=\tr(A^\tp M)$. Since
\[
(uu^\tp )(ss^\tp )=(u^\tp s)us^\tp ,
\]
we have
\[
B(uu^\tp ,ss^\tp )=L(uu^\tp ss^\tp ).
\]
Rank-one symmetric matrices span $\Sym(\mathbb{R}^5)$, so bilinearity extends the identity to all symmetric $X,Y$:
\[
B(X,Y)=L(XY). \qedhere
\]
\end{proof}

\section{The two perturbations}

Fix $U\in\GL_5(\mathbb R)$. For $\eta\in\{+1,-1\}$ and $\varepsilon>0$, define
\[
D_X^\eta=\diag(1,1,\varepsilon^2,\varepsilon^2,\eta\varepsilon),
\qquad
D_Y^\eta=\diag(\varepsilon^2,\varepsilon^2,1,1,\eta\varepsilon),
\]
and
\begin{equation}
X_\eta=UD_X^\eta U^\tp ,
\qquad
Y_\eta=U^{-\tp} D_Y^\eta U^{-1}.
\label{eq:perturbations}
\end{equation}

For both signs $\eta$, as $\varepsilon \to 0$, 
\[
(X_\eta,Y_\eta)\longrightarrow
(UDU^\tp ,U^{-\tp} EU^{-1})\in\mathcal O.
\]

Direct multiplication gives
\[
D_X^\eta D_Y^\eta
=\diag(\varepsilon^2,\varepsilon^2,\varepsilon^2,\varepsilon^2,\eta^2\varepsilon^2)
=\varepsilon^2I_5.
\]
Hence
\begin{equation}
X_\eta Y_\eta=\varepsilon^2I_5,
\qquad
Y_\eta X_\eta=\varepsilon^2I_5.
\label{eq:perturbation-products}
\end{equation}

On the other hand,
\[
\det D_X^\eta=\eta\varepsilon^5,
\]
so
\begin{equation}
\det X_\eta=(\det U)^2\eta\varepsilon^5.
\label{eq:perturbation-determinant}
\end{equation}
Therefore
\[
\det X_+>0,
\qquad
\det X_-<0.
\]
The determinant remembers the hidden sign.

Since $X_\eta Y_\eta=\varepsilon^2I_5$, all off-diagonal entries of $P$ vanish and all diagonal entries equal $\varepsilon^2$. Thus every defining quadratic satisfies
\[
q_{ij, \pm}(X_\eta,Y_\eta)=\varepsilon^2,
\]
and consequently
\[
h(X_\eta,Y_\eta)=\varepsilon^2.
\]
It follows from \eqref{eq:counterexample} and \eqref{eq:perturbation-determinant} that
\begin{equation}
f(X_+,Y_+)=\varepsilon^2,
\qquad
f(X_-,Y_-)=0.
\label{eq:perturbation-values}
\end{equation}

\begin{proposition}
For every finite family of real polynomials $p_1,\dots,p_M$ of degree at most two, there exist two points $z_+,z_-\in \mathbb{V}$ having identical signs for all $p_j$, but satisfying
\[
f(z_+)>0,
\qquad
f(z_-)=0.
\]
\end{proposition}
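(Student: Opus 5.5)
The plan is to take $z_\pm=(X_\pm,Y_\pm)$ from \eqref{eq:perturbations}, with a carefully chosen $U$ and a small enough $\varepsilon>0$. By \eqref{eq:perturbation-values} we already have $f(z_+)=\varepsilon^2>0$ and $f(z_-)=0$. So the whole task is to choose $U$ and $\varepsilon$ so that each $p_j$ has the same sign at $z_+$ and at $z_-$.

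First, split the family $p_1,\dots,p_M$ into two classes:
\begin{enumerate}
\item[(a)] those $p_j$ that vanish identically on the orbit $\mathcal O$;
\item[(b)] those $p_j$ that do not.
\end{enumerate}

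For each $p_j$ in class (b), consider the function
\[
\phi_j(U)=p_j\bigl(UDU^\tp ,U^{-\tp}EU^{-1}\bigr).
\]
Since $U^{-1}=\operatorname{adj}(U)/\det U$, this is a polynomial in the entries of $U$ divided by a power of $\det U$. It is not identically zero on $\GL_5(\mathbb R)$, because $p_j$ does not vanish on $\mathcal O$. The zero set of a nonzero polynomial on the open set $\GL_5(\mathbb R)$ is nowhere dense. Hence the set of $U$ with $\phi_j(U)\neq 0$ for every $j$ in class (b) is a finite intersection of open dense sets, and so it is nonempty. Fix such a $U$ and set $z_0=(UDU^\tp,U^{-\tp}EU^{-1})\in\mathcal O$.

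For $j$ in class (b), we have $p_j(z_0)\ne0$ by the choice of $U$. Both $z_+$ and $z_-$ converge to $z_0$ as $\varepsilon\to0$. By continuity, for all sufficiently small $\varepsilon$, both $p_j(z_+)$ and $p_j(z_-)$ have the sign of $p_j(z_0)$. Because there are only finitely many $j$, one $\varepsilon$ works for all of them simultaneously.

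For $j$ in class (a), the Quadratic orbit Lemma~\ref{lem:quadratic-orbit} gives a linear functional $L_j$ on $\mathbb R^{5\times5}$ with $p_j(X,Y)=L_j(XY)$. By \eqref{eq:perturbation-products}, $X_\eta Y_\eta=\varepsilon^2 I_5$ for both signs $\eta$. Therefore
\[
p_j(z_\pm)=\varepsilon^2L_j(I_5),
\]
which is the same value, and hence the same sign, at $z_+$ and $z_-$. So every $p_j$ has identical signs at the two points, which proves the proposition.

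The conceptual heart of the argument is class (a), and that is already handled by Lemma~\ref{lem:quadratic-orbit}. The only remaining care needed is the genericity choice of $U$ in class (b). This rests on the fact that a polynomial in the entries of $U$, not identically zero on the open set $\GL_5(\mathbb R)$, cannot vanish on any nonempty open subset of it.
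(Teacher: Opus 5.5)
Your proposal is correct and is essentially the paper's own proof. It uses the same points $(X_\pm,Y_\pm)$ and the same split according to whether $p_j$ vanishes on $\mathcal O$, handles the non-vanishing class by a generic $U$ plus continuity, and handles the vanishing class by Lemma~\ref{lem:quadratic-orbit} together with $X_\eta Y_\eta=\varepsilon^2 I_5$. You also spell out the genericity step (clearing the power of $\det U$ and using that a nonzero polynomial has nowhere dense zero set) slightly more explicitly than the paper does.
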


\begin{proof}
Split the $p_j$ into two classes.

If $p_j$ does not vanish identically on $\mathcal O$, then its pullback
\[
U\longmapsto p_j(UDU^\tp ,U^{-\tp} EU^{-1})
\]
is a nonzero rational function on $\GL_5(\mathbb R)$. A finite collection of such nonzero rational functions is simultaneously nonzero at some $U\in\GL_5(\mathbb R)$. Fix such a generic $U$ and write
\[
x=(UDU^\tp ,U^{-\tp} EU^{-1}).
\]
Then $p_j(x)\ne0$ for every polynomial in this first class. Since both perturbations tend to $x$, continuity gives the same strict sign for $p_j(X_+,Y_+)$ and $p_j(X_-,Y_-)$ once $\varepsilon$ is sufficiently small.

If $p_j$ vanishes identically on $\mathcal O$, Lemma~\ref{lem:quadratic-orbit} gives
\[
p_j(X,Y)=L_j(XY).
\]
Using \eqref{eq:perturbation-products},
\[
p_j(X_+,Y_+)=p_j(X_-,Y_-)=L_j(\varepsilon^2I_5)
\]
exactly.

There are finitely many $p_j$, so one sufficiently small $\varepsilon>0$ works for all of them.
\end{proof}

\begin{theorem}[The 30-dimensional lattice obstruction]
The function $f$ is a continuous, positively homogeneous of degree two, and piecewise quadratic polynomial. It admits no finite lattice expression in real polynomials of arbitrary degrees.
\end{theorem}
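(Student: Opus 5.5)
The theorem assembles the preceding results, and we argue by contradiction. The first sentence (continuity, positive homogeneity of degree two, and piecewise quadratic structure) is exactly the earlier Proposition. We only add that the pieces are semialgebraic. The sets $\{h>0,\ \det X>0\}$ and its complement are defined by finitely many polynomial sign conditions on the $40$ forms $q_{ij,\pm}$ and on $\det X$. Refining by which $q_{ij,\pm}$ attains the minimum in $h$ gives a finite semialgebraic partition of $\mathbb{V}$. On each piece $f$ equals either a single quadratic $q_{ij,\pm}$ or $0$, so $f$ is a semialgebraic spline of the form \eqref{eq:def}.

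For the obstruction, suppose $f$ admits a finite lattice expression in real polynomials of arbitrary degree. Since $f(tz)=t^2f(z)$ for $t>0$, the Separation Lemma applies with $g=f$. It produces a finite family $S=\{p_1,\dots,p_M\}$ of polynomials of degree at most two such that the sign vector $(\sgn p_1(z),\dots,\sgn p_M(z))$ determines $\sgn f(z)$. In particular, two points with the same sign vector must have the same sign of $f$.

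We now apply the preceding Proposition to this family $S$. It gives points $z_\pm=(X_\pm,Y_\pm)$ at which every $p_j$ has the same sign, while $f(z_+)=\varepsilon^2>0$ and $f(z_-)=0$, so $\sgn f(z_+)=1\neq 0=\sgn f(z_-)$. This contradicts the Separation Lemma, so no finite lattice expression exists.

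There is no genuinely hard step left at this stage, since the substance lies in the Quadratic Orbit Lemma and the perturbation construction. The point to check carefully is that the notion of ``signs determine the sign of $g$'' in the Separation Lemma is the same one violated by the Proposition. The Proposition gives identical signs, in $\{-1,0,1\}$, of all $p_j$ at $z_\pm$. For the polynomials vanishing on $\mathcal O$ the two values are exactly equal. For the others, both values share the nonzero sign of $p_j$ at the generic orbit point. In either case the sign vectors agree componentwise, including zero entries, which is what the contradiction requires.
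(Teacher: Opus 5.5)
Your proposal is correct and follows the paper's own route: the earlier Proposition gives the first sentence, the Separation Lemma reduces the question to whether finitely many degree-$\le 2$ polynomials can determine $\sgn f$, and the two-perturbation Proposition shows they cannot. Your added remarks make explicit two points the paper leaves implicit: the semialgebraic refinement by which $q_{ij,\pm}$ attains the minimum (with a fixed tie-breaking order so the pieces are disjoint), and the fact that the sign vectors agree including zero entries.
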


\section{Use of artificial intelligence}\label{sec:ai-usage}

The counterexample was obtained through a sustained investigation in a customized multi-agent, multi-model harness using models from the GPT (5.6 and 6) and Claude families (Opus 5 and Fable 5.1). Built around Codex and Claude Code, with additional consultations through ChatGPT, the harness supported persistent files and local code execution across sessions. Neither model family was assigned a predetermined role, such as construction or review. Roles were assigned dynamically as the investigation evolved, with both families constructing and reviewing arguments. The aim was to exploit diversity in mathematical ideas and intuitions, including different choices of techniques and different emphases on retrieved results and prior observations.

Human input provided high-level guidance on the mathematical significance of the different research targets. An orchestrator agent tracked progress, assigned parallel proof searches, counterexample searches, and reviews, and decided when to pursue or change direction, informed by cross-model discussion and fresh-context audits. It coordinated extensive exchanges of arguments, intermediate results, objections, and corrections across agents and model families. To limit bias from earlier suggestions or verdicts, selected investigators initially received only the problem statement, while selected reviewers received candidate arguments without prior review conclusions. A persistent record distinguished proved results, failed attempts, and unresolved reductions; executable algebraic checks were repeated on revised arguments. Figure~\ref{fig:ai-setup} summarizes the setup.

\begin{figure}[htb]
  \centering
  \includegraphics[width=\linewidth]{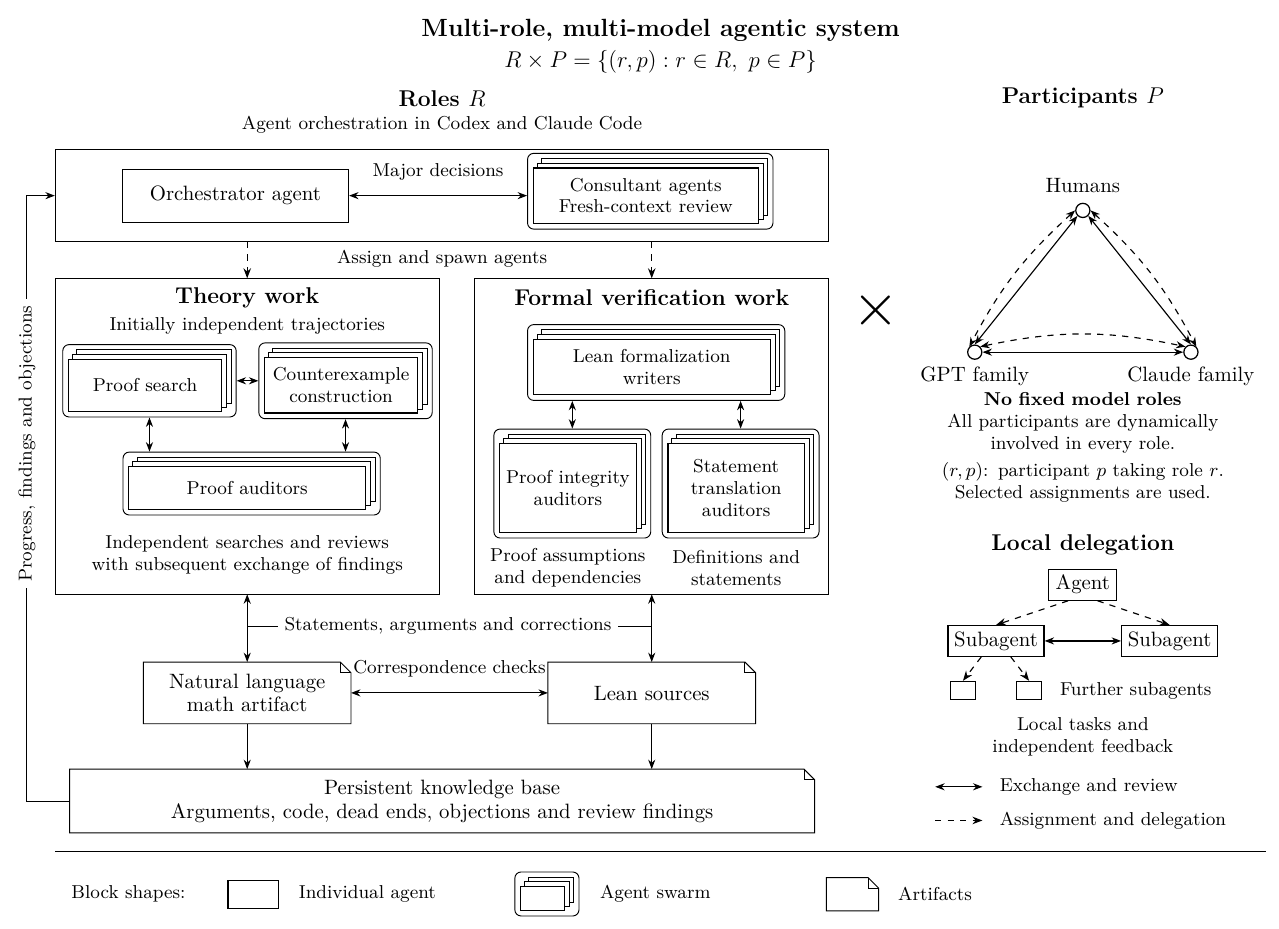}
  \caption[The multi-agent setup]{The multi-agent setup we used in this work.}
  \label{fig:ai-setup}
\end{figure}

Our investigation started with an attempt to extend results for hyperplane partitions to partitions defined by quadratic inequalities, alongside the unrestricted counterexample search. An auxiliary-degree reduction from the attempt to prove this extension supplied a counterexample criterion, and its radial argument was retained in the final proof. The matrix-pair construction in dimension~72 and the argument excluding representations of arbitrary degree emerged from GPT investigations. With further assistance from ChatGPT, we obtained the symmetric-matrix construction in dimension~30, simplified the proof, and prepared the present exposition.

\section{Insight in artificial intelligence}\label{sec:ai-app}

This article is slightly unusual in that while it is AI-assisted, it also sheds light on the AI technology that produces it. We showed in \cite{lai2024}  that a \emph{pure transformer}, i.e., with the ``add \& norm'' layers omitted, is a semialgebraic spline, and that if the Pierce--Birkhoff conjecture were true, then the converse would also hold. The counterexample here shows that this is not the case. Combined with the work in \cite{lai2024} and \cite{LL}, we now know that the inclusions are strict:
\[
\{ \text{splines} \} \quad \subsetneq \quad\{ \text{pure transformers} \} \quad \subsetneq \quad\{ \text{semialgebraic splines} \},
\]
i.e., every spline can be represented as a  ReLU-activated pure transformer, but there exist semialgebraic splines that cannot be so represented.

\end{document}